\documentclass[a4paper, 11pt,reqno]{amsart}
\title[Polyharmonic Neumann Problem]{Polyharmonic Neumann and mixed boundary value problems in the Heisenberg group $\mathbb H_n$}
\markboth
\pagestyle{plain}
\usepackage{amssymb, latexsym}
\usepackage{mathrsfs}
\usepackage{fancyhdr}
\def\H{\mathbb H}
\def\C{\mathbb C}
\def\R{\mathbb R}
\def\ben{\begin{eqnarray*}}
\def\een{\end{eqnarray*}}
\def\beg{\begin{eqnarray}}
\def\eeg{\end{eqnarray}}
\newtheorem{thm}{Theorem}[section]

\newtheorem{lem}[thm]{Lemma}

\author[S. Dubey, A. kumar and M. M. Mishra ]{Shivani Dubey, Ajay Kumar* and Mukund Madhav Mishra}
\address{Department of Mathematics, University of Delhi, Delhi, India}
\email{shvndb@gmail.com}
\address{Department of Mathematics, University of Delhi, Delhi, India}
\email{akumar@maths.du.ac.in}

\address{Department of Mathematics, Hans Raj College, University of Delhi, Delhi, India}
\email{mukund.math@gmail.com}
\thanks{*Corresponding author. Email: akumar@maths.du.ac.in}
\date{}
\numberwithin{equation}{section}
\begin{document}
\begin{abstract}
We study the polyharmonic Neumann and mixed boundary value problems on the Kor\'{a}nyi ball in the Heisenberg group $\H_n$. Necessary and sufficient solvability conditions are obtained for the nonhomogeneous polyharmonic Neumann problem and Neumann-Dirichlet boundary value problems.
\end{abstract}
\keywords{Heisenberg group; sub-Laplacian; Kelvin transform; Kor\'{a}nyi ball; Green's function}
\subjclass[2010]{31B30; 35R03; 35M32}
\maketitle
\section{Introduction}
 As we know, by convolution, the expressions of iterated polyharmonic Green and polyharmonic Neumann functions can not be easily constructed in explicit form because of complicated computation. Rewriting the higher order Poisson equation $\Delta^m u = f$ in a plane domain as a system of Poisson equations it is immediately clear what boundary conditions may be prescribed in order to get (unique) solutions. Neumann conditions for the Poisson equation lead to higher-order Neumann (Neumann-m) problems for $\Delta^m u = f$. Extending the concept of Neumann functions for the Laplacian to Neumann functions for powers of the Laplacian leads to an explicit representation of the solution to the Neumann-m problem for $\Delta^m u = f$. The representation formula provides the tool to treat more general partial differential equations with leading term $\Delta^m u$  in reducing them into some singular integral equations. Integral representations for solutions to higher order differential equations can be obtained by iterating those representations formulas for first order equations.\\
In case of the unit disk $\mathbb D$ of the complex plane $\C$ the higher order Green function is explicitly known, see e.g. \cite{beg2}. Although this is not true for the higher-order Neumann function nevertheless it can be constructed iteratively. A large number of investigations on various boundary value problems in the Heisenberg group have widely been published \cite{bolaug,jeri,kor6}. However, the polyharmonic Dirichlet problem on the Heisenberg group appeared \cite{amm}. After discussing the Neumann problem for Kohn-Laplacian on the Heisenberg Group $\mathbb H_n$ in \cite{sam}, we consider here the polyharmonic Neumann problem with circular data.\\ In the next section of this article, we study the basic terminologies which have laid the foundation for harmonic analysis on the Heisenberg group. In section 3, we discuss the solvability of the polyharmonic Neumann problem  and a representation formula for a solution of the polyharmonic Neumann problem on the Kor\'{a}nyi ball in $\H_n$ is given. Section 4 deals with the polyharmonic mixed boundary value problems like Neumann-Dirichlet and Dirichlet-Neumann on the Kor\'{a}nyi ball in $\H_n$. 
\section{The Heisenberg group $\H_n$ and Horizontal normal vectors}
2.1. The Lie algebra of $\H_n$ and the Kohn-Laplacian. \\The underlying set of the Heisenberg group is $\C^n \times \R$ with multiplication given by 
$$[z,t].[\zeta,s]=[z+\zeta,t+s+2 \Im (z.\bar{\zeta})], \forall \;[z,t],\;[\zeta,s] \in \C^n \times \R.$$ 
With this operation and the usual $C^{\infty}$ structure on $\C^n \times \R(=\R^{2n} \times \R),$ it becomes a Lie group denoted by $\H_n$. 
A basis of left invariant vector fields on $\H_n$ is given by $\{Z_j,\bar{Z}_j,T:1\leq j\leq n\}$ where, 
\begin{eqnarray*}
Z_j &=& \partial_{z_j}+i\bar{z}_j\partial_t ;\\
\bar{Z}_j &=& \partial_{\bar{z}_j}-iz_j\partial_t ;\\
T &=& \partial_t.
\end{eqnarray*}
If we write $z_j=x_j+iy_j$ and define,
\ben
X_j&=& \partial _{x_j}+2y_j \partial _t,\\
Y_j&=& \partial _{y_j}-2x_j \partial _t,
\een
then, $$Z_j=\frac{1}{2}(X_j-iY_j),$$ and $\{X_j, Y_j, T\}$ is a basis.\\
Given a differential operator $D$ on $\H_n$, we say that it is left invariant if it commutes with left translation $L_g$ for all $g \in \H_n$ and rotation invariant if it commutes with rotations $R_\theta$ for all $\theta \in U(\H_n),$ the unitary group $\H_n$. It is said to be homogeneous of degree $\alpha$ if $$D(f(\delta_rg))=r^\alpha Df(\delta_rg),$$ $\delta_r$ being the Heisenberg dilation defined by $\delta_r[z,t]=[rz,r^2t].$ It can be shown that up to a constant multiple, there is a unique left invariant, rotation invariant differential operator that is homogeneous of degree 2 \cite{foko,than}. This unique operator is called the sub-Laplacian or Kohn-laplacian on the Heisenberg group. 
The sublaplacian $\textsl{L}$ on $\mathbb{H}_n$ is explicitly given by
$$\textsl{L}=-\sum_{j=1}^n\left(X_j^2 + Y_j^2\right).$$
Let $L_0$ denote the slightly modified subelliptic operator $-\frac{1}{4}\textsl{L}$.\\The fundamental solution for $L_0$ on  $\mathbb{H}_n$ with pole at identity is given in \cite{fol} as 
$g_e(\xi)=g_e([z,t])=a_0(|z|^4+t^2)^{-\frac{n}{2}},$
where,
$$a_0=2^{n-2}\frac{(\Gamma(\frac{n}{2}))^2}{\pi^{n+1}},$$ 
is the normalization constant and $\xi=[z,t]$. The fundamental solution of $L_0$ with pole at $\eta$ is given by

$$g_\eta(\xi)=g_e(\eta^{-1}\xi).$$
For a function $f$ on $\H_n$, the Kelvin transform is defined as in \cite{kor82} by $$Kf= N^{-2n}foh,$$ where $h$ is the inversion defined as $$h([z,t])=\left[\frac{-z}{|z|^2-it},\frac{-t}{|z|^4+t^2}\right],$$ for $[z,t] \in\H_n\setminus\{e\}$. From \cite[(3.3)]{kori} we have, for $\eta\neq e\in\H_n$
\begin{eqnarray*}
K(g_\eta)=N(\eta)^{-2n}g_{\eta^*},
\end{eqnarray*}
where we wrote $\eta^*$ for $h(\eta)$.\\ An integrable function $f$ on $\H_n$ is called circular if it is invariant under circle action i.e., $\displaystyle{f=\bar{f}=\frac{1}{2 \pi} \int_0^{2 \pi}{f([e^{i \theta}z,t])d\theta}.}$

2.2. Horizontal Normal Vectors. As in \cite{gav2} and \cite{kor83}, we define a singular Riemannian metric $(M_0)$ as follows. On the linear span of $X_j$, $Y_j (1 \leq j \leq n)$ we define an inner product ${\langle .,. \rangle_0}$ by the condition that the vectors $X_j,\; Y_j$ form an orthonormal system. For vectors not in this span we say that they have infinite length. A vector is said to be horizontal if it has finite length. The horizontal gradient $\nabla_0F$ of a function $F$ on $\H_n$ is defined as the unique horizontal vector such that $$\langle \nabla_0F,V \rangle_0=V. F,$$ where $V.F$ is action of $V$ on $F$, for all horizontal vectors $V$. $\nabla_0F$ can be explicitly written as
$$\nabla_0F=\sum_j{\{(X_jF)X_j+(Y_jF)Y_j\}}.$$
 If a hypersurface in $\H_n$ is given as the level set of a smooth function $F$, at any regular point i.e. a point at which $\nabla_0 F\ne 0$ the horizontal normal unit vector will be defined by $$\frac{\partial}{\partial n_0}:=\frac{1}{||\nabla_0F||_0}\nabla_0F.$$ More precisely, this is the horizontal normal pointing outwards for the domain $\{F<0\}$. A point $\zeta$ on the boundary surface is termed ``characteristic" if $\nabla_0 F(\zeta)=0.$ For smooth $F$, the set of characteristic points form a lower dimensional (and hence of measure zero) subset of the boundary.\\ From here onwards the operators $\frac{\partial}{\partial n_0}$ and $L_0$, whenever applied to a function will be with respect to the variable $\xi$ only.

\section{Polyharmonic Neumann Boundary Value Problem } 
In this section we solve the higher-order Neumann problems for $L_0^mu=f$ to get an explicit representation of the solution. Let $S_0$ denotes the set of characteristic points of $B=\{[z,t]\in \H_n:(|z|^4+t^2)^{1/4} \leq 1\}$.\\We define, for each positive integer $k$, the function $N_k(\eta,\xi)$ inductively. Let $N_1(\eta,\xi)=N(\eta,\xi),$ and when $N_{k-1}(\eta,\xi)$ is defined, define
\beg \label{2}
N_k(\eta,\xi)=\int_B{N_{k-1}(\eta,\zeta)N(\zeta,\xi)d\zeta},
\eeg where $N(\eta,\xi)$ is the Neumann function for $B$ given in \cite{sam} as
\beg \label{N}
N(\eta,\xi)&=& \overline{g_\eta}(\xi)+K(\overline{g_\eta})(\xi^{-1})+h(\eta,\xi),
\eeg
\ben
h(\eta,\xi)&=&\sum_{k=1}^\infty\sum_{m=1}^\infty\sum_{j=1}^{n_k}\frac{2n}{m}a_{m;k}C_{\frac{1}{2}(m-2k)}^{(\frac{n}{2}+k,\frac{n}{2}+k)}(t+i|z|^2)Y_{k;j}(z)\\
&& C_m^{(\frac{n}{2}+k, \frac{n}{2}+k)}(t'+i|z'|^2)\overline{Y_{k;j}(z')}+b_0,
\een  $b_0$ is a constant, $n_k,$ is dimension of the space $H_k=H_{k,k}$ where the space $H_{k,l}$ of complex (solid) spherical harmonics of bidegree $(k,l)$ on $\C^n$ consists of all polynomials $P$ in $z_1,\ldots z_n,\;\bar{z}_1,\ldots \bar{z}_n$, homogeneous of degree $k$ in the $z_j's$ and homogeneous of degree $l$ in the $\bar{z}_j's$  satisfying $$\sum_{j=1}^n{\frac{\partial^2}{\partial z_j \partial \bar{z_j}}P}=0.$$ For each $k,l$ the ${Y_{k,l;j}}'s$ form a basis for $H_{k,l}$, where ${Y_{k,l;j}}'s$ have the form 
$$Y_{k,l;j}(z)=\sum_{q=0}^r{c_q|z^*|^{2q}{z_1}^{k-q}{(\bar{z}_1)}^{l-q}},$$ $z=(z_1,z^*)\in\C^n,\; |z^*|^2=|z_2|^2+\ldots+|z_n|^2,\; \text{where},\;r=\min(k,l)\;\text{and}\;c_0,\ldots,c_r$ are constants, $c_0=1$ and ${c_q}'s$ are determined by the relation $$(k-q)(l-q)c_q+(q+1)(n+q-1)c_{q+1}=0,\;0\leq q<r,$$ for detailed study of spherical harmonics refer to \cite{grko}.\\
The only possible points of singularities of integrands in (\ref{2}) are at $\zeta=\eta$ and $\zeta=\xi$. Estimating near the points of singularities we show that integral on the right-hand side of (\ref{2}) is convergent and this kernel is well defined.\\
\begin{lem} \label{N2}
The functions $N_k$ as defined in (\ref{2}) are integrable as functions of $\xi$ and hence are defined finitely a.e.
\end{lem}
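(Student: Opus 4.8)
The plan is to prove the statement by induction on $k$, showing that each $N_k(\eta,\cdot)$ is dominated on $B$ by a gauge power whose singular strength decreases as $k$ grows. Throughout I write $Q=2n+2$ for the homogeneous dimension of $\H_n$ (the dilation Jacobian of $\delta_r$ is $r^{2n}\cdot r^2=r^Q$), so that the Haar measure of a Kor\'{a}nyi ball of radius $r$ is comparable to $r^Q$. The basic integrability criterion I would use repeatedly is that a gauge power $N(\eta^{-1}\xi)^{-a}$ is locally integrable on $B$ precisely when $a<Q$, which follows by writing the integral in polar-type coordinates as $\int_0^R r^{Q-1-a}\,dr$.

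First I would record the local behaviour of the Neumann function. Since $g_e(\xi)=a_0 N(\xi)^{-2n}$ with $N(\xi)=(|z|^4+t^2)^{1/4}$, and $\overline{g_\eta}$ is the circle average of the left translate $g_\eta(\xi)=g_e(\eta^{-1}\xi)$, the first term in (\ref{N}) carries a singularity of order at most $2n$ at $\xi=\eta$ and is smooth elsewhere. The reflected term $K(\overline{g_\eta})(\xi^{-1})$, by the identity $K(g_\eta)=N(\eta)^{-2n}g_{\eta^*}$, has its pole at $\eta^*=h(\eta)\notin\overline B$ for interior $\eta$, hence is bounded on $B$; and for fixed interior $\eta$ the series $h(\eta,\xi)$ is bounded on $B$. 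Collecting these,
\beg\label{Nbound}
|N(\eta,\xi)| &\le& C_\eta\bigl(N(\eta^{-1}\xi)^{-2n}+1\bigr),\qquad \xi\in B,
\eeg
with the singular part concentrated at $\xi=\eta$. Because $2n<Q$, the right-hand side is integrable in $\xi$, settling the base case $N_1=N$.

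For the inductive step I would invoke the standard composition estimate for gauge potentials on a homogeneous group: for $0<a,b<Q$,
\ben
\int_B N(\eta^{-1}\zeta)^{-a}\,N(\zeta^{-1}\xi)^{-b}\,d\zeta
&\le& C\begin{cases}
N(\eta^{-1}\xi)^{-(a+b-Q)}, & a+b>Q,\\
1+\bigl|\log N(\eta^{-1}\xi)\bigr|, & a+b=Q,\\
1, & a+b<Q,
\end{cases}
\een
proved by splitting $B$ into the regions $N(\eta^{-1}\zeta)\le\tfrac12 N(\eta^{-1}\xi)$, $N(\zeta^{-1}\xi)\le\tfrac12 N(\eta^{-1}\xi)$ and the complement, and using the quasi-triangle inequality for the gauge together with homogeneity of the measure. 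Assuming $|N_{k-1}(\eta,\zeta)|\le C N(\eta^{-1}\zeta)^{-a_{k-1}}+C$ with $a_{k-1}=2n-2(k-2)$, the integrand in (\ref{2}) is singular only at $\zeta=\eta$ (from $N_{k-1}$) and $\zeta=\xi$ (from $N$); applying (\ref{Nbound}) and the composition estimate with $a=a_{k-1}$, $b=2n$ yields
\ben
|N_k(\eta,\xi)| &\le& C\,N(\eta^{-1}\xi)^{-a_k}+C,\qquad a_k=a_{k-1}+2n-Q=2n-2(k-1),
\een
the singular term being replaced by a logarithmic or bounded term once $a_k\le0$. Since $a_k<Q$ for every $k\ge1$, the bound is integrable in $\xi$ over $B$; this both shows that the defining integral (\ref{2}) converges for a.e.\ $\xi$ and gives $N_k(\eta,\cdot)\in L^1(B)$, completing the induction.

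The main obstacle is the bookkeeping at the two simultaneous singularities $\zeta=\eta$ and $\zeta=\xi$: one must confirm that the only possibly divergent contributions arise from these points, control the passage of the singularity of $N_{k-1}$ through the convolution, and handle the borderline case $a_{k-1}+2n=Q$ where a logarithmic factor appears yet integrability survives. The bounded pieces coming from the Kelvin and $h$ terms enter only as convolutions of an $L^1$ kernel with a bounded function and are therefore harmless, so the entire estimate is governed by the leading gauge singularities tracked above.
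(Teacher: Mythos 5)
Your overall strategy (induction on $k$, domination by gauge kernels, and the standard Folland--Stein composition estimate for Riesz-type kernels on a homogeneous group, with the bookkeeping $a_k=2n-2(k-1)<Q=2n+2$) is a legitimate and in principle more quantitative route than the paper's, but as written it rests on a false pointwise bound, and this is a genuine gap. The kernel in (\ref{N}) is built from the \emph{circularized} fundamental solution: by the definition of circularization in Section 2, $\overline{g_\eta}(\xi)=\frac{1}{2\pi}\int_0^{2\pi}g_\eta([e^{i\theta}z,t])\,d\theta$ for $\xi=[z,t]$, and since the gauge is rotation invariant this equals $\frac{a_0}{2\pi}\int_0^{2\pi}N\bigl((R_{-\theta}\eta)^{-1}\xi\bigr)^{-2n}d\theta$, i.e.\ a point singularity of order $2n$ smeared along the whole circle orbit of $\eta$. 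If $\xi$ lies on that orbit but $\xi\neq\eta$, a direct computation with the group law shows that the gauge $N\bigl((R_{-\theta}\eta)^{-1}\xi\bigr)$ vanishes like $|z_\eta|\,|\theta-\theta_*|^{1/2}$ as $\theta$ approaches the angle $\theta_*$ at which $R_{-\theta}\eta=\xi$, so the integrand behaves like $|\theta-\theta_*|^{-n}$ and $\overline{g_\eta}(\xi)=+\infty$ at every point of the orbit, while your majorant $C_\eta\bigl(N(\eta^{-1}\xi)^{-2n}+1\bigr)$ stays bounded there. Thus $N(\eta,\cdot)$ is not ``singular only at $\xi=\eta$ and smooth elsewhere'': it blows up along an entire circle, and no choice of $C_\eta$ makes your bound hold even almost everywhere, since $\overline{g_\eta}$ is unbounded on every neighbourhood of every orbit point. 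Consequently the composition estimate for point-singularity kernels cannot be applied as stated, and the same defect propagates into your inductive hypothesis on $N_{k-1}$.

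The argument is repairable, but the repair is precisely the missing idea: carry the circle averages along. Since all kernels involved are nonnegative, Tonelli's theorem and rotation invariance let you pull the two circle averages outside the $\zeta$-integral in (\ref{2}), apply your composition estimate to the un-averaged gauge kernels inside, and re-average; the inductive bound then takes the form $|N_k(\eta,\xi)|\le C\,\frac{1}{2\pi}\int_0^{2\pi}N\bigl((R_{-\theta}\eta)^{-1}\xi\bigr)^{-a_k}d\theta+C$, with your exponents $a_k$, and such circular averages are integrable in $\xi$ because each rotated point kernel is, uniformly in $\theta$. Note that this corrected version would actually yield sharper information than the paper, whose own proof is entirely different in character: it splits $N_2$ into nine integrals according to the three terms of (\ref{N}), disposes of each by sup bounds, continuity of $h$, boundedness of the Kelvin-reflected term, and Fubini-type arguments, and then runs the induction using only the integrability of $N_k$ together with the three-term splitting of the last factor, never tracking the order of any singularity.
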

\begin{proof}
We first show that $N_1=N$ is integrable over $B$. Using polar coordinates in $\H_1$ and the fact that measure on $B$ is translation invariant, it follows that $g_\eta(\xi)$ is integrable. The second term in (\ref{N}) is integrable as it is equal to $N(\eta)^{-2n}g_{\eta^*}(\xi^{-1})$ and $h(\eta,\xi)$ being continuous function over $B$ so the function $N(\eta,\xi)$ is integrable over $B$.\\ Next we claim that $N_2(\eta,\xi)$ is integrable by writing $N_2(\eta,\xi)$ as sum of nine integrals $I_i\;(1 \leq i \leq 9)$, where
\begin{equation} \label{N2'}
\left.
\begin{split}
&I_1(\zeta)=\int_B{\overline{g_\eta}(\xi)\overline{g_\xi}(\zeta)d\xi},\; I_2(\zeta)=N(\eta)^{-2n}\int_B{\overline{g_{\eta^*}}(\xi^{-1})\overline{g_\xi}(\zeta)d\xi},\\
&I_3(\zeta)=\int_B{h(\eta,\xi)\overline{g_\xi}(\zeta)d\xi},\; I_4(\zeta)=\int_B{\overline{g_\eta}(\xi)N(\xi)^{-2n}\overline{g_{\xi^*}}(\zeta^{-1})d\xi},\\
&I_5(\zeta)=N(\eta)^{-2n}\int_B{\overline{g_{\eta^*}}(\xi^{-1})N(\xi)^{-2n}\overline{g_{\xi^*}}(\zeta^{-1})d\xi},\\
&I_6(\zeta)=\int_B{h(\eta,\xi)N(\xi)^{-2n}\overline{g_{\xi^*}}(\zeta^{-1})d\xi},\\
&I_7(\zeta)=\int_B{\overline{g_\eta}(\xi)h(\xi,\zeta)d\xi},\; I_8(\zeta)=N(\eta)^{-2n}\int_B{\overline{g_{\eta^*}}(\xi^{-1})h(\xi,\zeta)d\xi},\\
&I_9(\zeta)=\int_B{h(\eta,\xi)h(\xi,\zeta)d\xi}.
\end{split}
\right\}
\end{equation} 
$I_1(\zeta)$ exists for almost all $\zeta$ as $\int_B{\overline{g_\xi}(\zeta)}$ is uniformly bounded. Similarly $I_3(\zeta)$ and $I_7(\zeta)$ exist for almost all $\zeta$. Also
$$I_2(\zeta) \leq N(\eta)^{-2n} \sup_{\xi \in B}\overline{g_{\eta^*}}(\xi)\int_B{\overline{g_\xi}(\zeta)d\xi},$$ so exists. Similarly $I_8(\zeta)$ exists for almost all $\zeta$. Next,
$$I_4(\zeta)\leq \sup_{\xi \in B}\overline{g_{\xi^*}}(\zeta)\int_B{\overline{g_\eta}(\xi) N(\xi)^{-2n}d\xi}, $$ and
\ben
\int_B{\overline{g_\eta}(\xi) N(\xi)^{-2n}d\xi} &\leq& \left[\sup_{B \setminus (B_1 \cup B_2)}\overline{g_\eta}(\xi) N(\xi)^{-2n}\right] \nu(B \setminus (B_1 \cup B_2))\\
&&+ \sup_{B_1}\overline{g_\eta}(\xi) \int_{B_1}{ N(\xi)^{-2n}d\xi}+ \sup_{B_2}N(\xi)^{-2n} \int_{B_2}{\overline{g_\eta}(\xi)d\xi},
\een
where $\nu$ is the Haar measure on the Heisenberg group and $B_1$, $B_2$ are disjoint small neighbourhoods of $e$ and $\eta$ respectively. Applying dominated convergence theorem and Fubini's theorem, it can be shown that $\sup_{\xi \in B}\overline{g_{\xi^*}}(\zeta)$ is integrable function of $\zeta$.\\ For $\eta \neq 0$, $\overline{g_{\eta^*}}(\zeta)$ is a bounded function of $\zeta$. So,
\ben
I_5(\zeta) &\leq& N(\eta)^{-2n} \sup_{\zeta\in B}\overline{g_{\eta^*}}(\zeta)\int_B{N(\xi)^{-2n}\overline{g_{\xi^*}}(\zeta)d\xi}\\
&\leq& K_1'\sup_{\xi \in B}\overline{g_{\xi^*}}(\zeta)\int_B{N(\xi)^{-2n}d\xi}\\
&=& K_1 \sup_{\xi \in B}\overline{g_{\xi^*}}(\zeta),
\een
$K_1'$ and $K_1$ being constants independent of $\zeta$. Since $\sup_{\xi \in B}\overline{g_{\xi^*}}(\zeta)$ is integrable function of $\zeta$, so is $I_5$. On same lines, it can be shown that $I_6(\zeta)$ is integrable function of $\zeta$. Since $h(\eta,\xi)$ is continuous function on $B$ so by Fubini's theorem $I_9(\zeta)$ exists for almost all $\zeta$. Having proved integrability of $I_i\; (1 \leq i \leq 9)$, we can say that the integrals exist finitely a.e. and $N_2(\eta,\zeta)$ is integrable function of $\zeta$.\\ We shall, by induction, prove that $N_k(\eta,\zeta)$ exists for $\eta \neq \zeta$ and as a function of $\zeta$ it is integrable over $B$. Assume, we have proved that $N_k$ exists and is integrable over $B$. We have,
\beg \label{N1}
N_{k+1}(\eta,\zeta)&=& \int_B{N_k(\eta,\xi)N_1(\xi,\zeta)d\xi} \nonumber\\
&=& \int_B{N_k(\eta,\xi)\overline{g_\xi}(\zeta)d\xi}+N(\zeta)^{-2n}\int_B{N_k(\eta,\xi)\overline{g_\xi}(-\zeta^*)d\xi}\nonumber\\
&&+\int_B{N_k(\eta,\xi)h(\xi,\zeta)d\xi}.
\eeg
Consider
\ben
\int_B{\int_B{N_k(\eta,\xi)\overline{g_\xi}(\zeta)d\zeta d\xi}} &=& \int_B{N_k(\eta,\xi)\left(\int_B{\overline{g_\xi}(\zeta)d\zeta}\right) d\xi}\\
&=& \left(\int_B{\overline{g_e}(\zeta)d\zeta}\right)\left(\int_B{N_k(\eta,\xi) d\xi}\right).
\een
So $\int_B{N_k(\eta,\xi)\overline{g_\xi}(\zeta)d\xi}$ exists a.e. and is integrable. Since for $\zeta \in B$, $\zeta \notin B $, the second term in (\ref{N1}) can be estimated as
$$N(\zeta)^{-2n}\int_B{N_k(\eta,\xi)\overline{g_\xi}(-\zeta^*)d\xi} \leq N(\zeta)^{-2n}\sup_{\xi \in B}\overline{g_\xi}(-\zeta^*)\int_B{N_k(\eta,\xi)d\xi}.$$ As $N(\zeta)^{-2n}\sup_{\xi \in B}\overline{g_\xi}(-\zeta^*)$ is integrable function of $\zeta$, we obtain the integrability of the second term in (\ref{N1}). Similarly we can estimate the third term in (\ref{N1}). Finally, we obtain the integrability of $N_{k+1}$. 
\end{proof}
\begin{thm} \label{N3}
For continuous circular functions $f$ on $B$ and $g_j$'s, $0\leq j \leq m-1$ on $\partial B$, the polyharmonic Neumann BVP
\begin{equation} \label{1}
\left.
\begin{split}
 L_0^m u&=f \;\;\;\text{in}\;B\\
\frac{\partial}{\partial n_0}(L_0^ju)&=g_j \;\;\text{on}\; \partial B \setminus S_0,\;0\leq j\leq m-1 
\end{split}
\right\}
\end{equation} 
is solvable if and only if
\beg \label{1*}
&&\int_B{\left(\int_B{N_{m-j-1}(\eta,\xi)f(\eta)dv(\eta)}-\sum_{\mu=j+1}^{m-1}\int_{\partial B\setminus S_0}{N_{\mu-j}(\eta,\xi)g_{\mu}(\eta)d\sigma(\eta)}\right)dv(\xi)}\nonumber\\
&&= \int_{\partial B \setminus S_0}{g_j(\xi)d\sigma(\xi)},\;0\leq j\leq m-2,\; m \geq 2, \nonumber\\
&&\int_B{f(\xi)dv(\xi)}=\int_{\partial B\setminus S_0}{g_{m-1}(\xi)d\sigma(\xi)},
\eeg and the circular solution is given by
\beg \label{2*}
u(\xi)=\int_B{N_m(\eta,\xi)f(\eta)dv(\eta)}-\sum_{\mu=0}^{m-1}\int_{\partial B\setminus S_0}{N_{\mu+1}(\eta,\xi)g_{\mu}(\eta)d\sigma(\eta)}.
\eeg
\end{thm}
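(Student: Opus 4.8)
The plan is to reduce the single $m$-th order equation to a cascade of $m$ ordinary (second-order) Neumann problems for $L_0$ and to solve them one at a time. Setting $v_j := L_0^j u$ for $0\le j\le m$, the problem \eqref{1} is equivalent to the chain
\[
L_0 v_j = v_{j+1}\ \text{in}\ B,\qquad \frac{\partial}{\partial n_0}v_j = g_j\ \text{on}\ \partial B\setminus S_0,\qquad 0\le j\le m-1,
\]
together with the terminal identity $v_m=f$. Each link is a single Neumann problem for $L_0$ whose circular solution is furnished by the Neumann function $N=N_1$ of \cite{sam}, namely $v_j(\xi)=\int_B N(\eta,\xi)v_{j+1}(\eta)\,dv(\eta)-\int_{\partial B\setminus S_0}N(\eta,\xi)g_j(\eta)\,d\sigma(\eta)$, valid precisely when the compatibility condition of that link holds.

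To reach the representation \eqref{2*}, I would solve the chain from the top, starting at $v_m=f$ and applying the single-Neumann representation repeatedly. At each stage one meets a nested integral of the shape $\int_B N_k(\eta,\zeta)N(\zeta,\xi)\,dv(\zeta)$; by Fubini's theorem---justified by the a.e.-finiteness and integrability established in Lemma \ref{N2}---together with the very definition \eqref{2} of the iterated kernels, this collapses to $N_{k+1}(\eta,\xi)$. An induction downward on $j$ then yields
\[
v_j(\xi)=\int_B N_{m-j}(\eta,\xi)f(\eta)\,dv(\eta)-\sum_{\mu=j}^{m-1}\int_{\partial B\setminus S_0}N_{\mu-j+1}(\eta,\xi)g_\mu(\eta)\,d\sigma(\eta),
\]
and specializing to $j=0$ gives the claimed formula \eqref{2*}.

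The solvability conditions \eqref{1*} arise from the compatibility condition of each link. Integrating $L_0 v_j=v_{j+1}$ over $B$ and invoking the divergence theorem for the horizontal gradient of Section 2.2 (the characteristic set $S_0$ being of measure zero) produces the single-Neumann compatibility relation $\int_B v_{j+1}\,dv=\int_{\partial B\setminus S_0}g_j\,d\sigma$. For $j=m-1$ this is $\int_B f\,dv=\int_{\partial B\setminus S_0}g_{m-1}\,d\sigma$, the last line of \eqref{1*}; for $0\le j\le m-2$, substituting the representation of $v_{j+1}$ obtained above converts it into exactly the first block of \eqref{1*}. These relations are necessary---apply the divergence theorem to any solution and use uniqueness of the circular solution to identify $v_{j+1}=L_0^{j+1}u$ with its representation---and sufficient, since they are precisely what is needed to solve each link in turn and thereby close the downward induction.

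I expect the main obstacle to be the rigorous justification of the repeated interchange of the order of integration and of the kernel collapse, because $N$ is singular at $\eta=\xi$ and on $S_0$; this is exactly the difficulty that Lemma \ref{N2} is built to control, so the argument will hinge on combining its integrability bounds with dominated convergence near the singularities. A secondary point is verifying that each constructed $v_j$ is genuinely circular---which follows since $f$ and the $g_\mu$ are circular and the kernels $N_k$ respect the circle action---so that the uniqueness invoked in the necessity direction applies and the additive-constant ambiguity intrinsic to Neumann problems is pinned down by the circular normalization already built into $N$.
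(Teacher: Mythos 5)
Your proposal is correct and is essentially the paper's own argument: the paper proves the theorem by induction on $m$, setting $w=L_0u$ and combining the single Neumann result of \cite{sam} (Theorem 4.1) with Lemma \ref{N2} and Fubini's theorem to collapse the iterated kernels via (\ref{2}) --- which is exactly your chain $v_j=L_0^ju$ solved from the top down, link by link. The only difference is organizational (the paper's induction on $m$ peels off one equation at a time, while you unroll the whole cascade and induct downward on $j$), so the two proofs coincide in substance, including the concluding verification that (\ref{2*}) satisfies (\ref{1}) when (\ref{1*}) holds.
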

\begin{proof}
 It is easy to see that, using Lemma \ref{N2}, $\displaystyle{\int_B{N_k(\eta,\xi)f(\eta)dv(\eta)}}$ and $\displaystyle{\int_{\partial B\setminus S_0}{N_k(\eta,\xi)g_\mu(\eta)dv(\eta)}}$ exist for all $1\leq k \leq m$. The proof follows by induction. Assume the result is true for $m=k-1$ and consider (\ref{1}) with $m=k$. Let $L_0u=w$ in $\bar{B}$. Then (\ref{1}) reduces to 
\ben
L_0^{k-1}w&=&f \;\;\text{in}\; B\\
\frac{\partial}{\partial n_0}(L_0^jw)&=&g_{j+1},\;\;\text{on}\; \partial B,\;0\leq j \leq k-2.
\een
From induction hypothesis, the above BVP is solvable if and only if
\beg \label{3}
&&\int_{B}{\left(\int_B{N_{k-j-2}(\eta,\zeta)f(\eta)dv(\eta)}-\sum_{\mu=j+1}^{k-2}\int_{\partial B\setminus S_0}{N_{\mu-j}(\eta,\zeta)g_{\mu+1}(\eta)d\sigma(\eta)}\right)dv(\zeta)}\nonumber\\
&&= \int_{\partial B \setminus S_0}{g_{j+1}(\zeta)d\sigma(\zeta)},\; 0\leq j\leq k-3,\; k \geq 3 \nonumber\\
&&\int_B{f(\zeta)dv(\zeta)}=\int_{\partial B\setminus S_0}{g_{k-1}(\zeta)d\sigma(\zeta)},
\eeg and the circular solution is given by
\beg \label{4}
w(\zeta)=\int_B{N_{k-1}(\eta,\zeta)f(\eta)dv(\eta)}-\sum_{\mu=0}^{k-2}\int_{\partial B\setminus S_0}{N_{\mu+1}(\eta,\zeta)g_{\mu+1}(\eta)d\sigma(\eta)}.
\eeg
 The solution of (\ref{1}) with $m=k$ is then the solution of 
\ben
L_0u&=&w \;\;\text{in}\; B\\
\frac{\partial}{\partial n_0}u&=&g_0 \;\;\text{on}\; \partial B.
\een
Using (\cite{sam}, Theorem 4.1) the above BVP is solvable if and only if 
\beg \label{5}
\int_B{w(\zeta)dv(\zeta)}=\int_{\partial B \setminus S_0}{g_0(\zeta)d\sigma(\zeta)},
\eeg and since $w$ is circular using (\ref{4}), the solution is given by
\beg \label{6}
u(\xi)=\int_B{N(\zeta,\xi)w(\zeta)dv(\zeta)}-\int_{\partial B \setminus S_0}{N(\zeta,\xi)g_0(\zeta)d\sigma(\zeta)}.
\eeg 
Combining the solvability conditions (\ref{3}) with (\ref{5}), substituting the value of $w$ from (\ref{4}) into (\ref{6}), using expression of $N_k(\eta,\xi)$ in (\ref{2}) and Lemma \ref{N2} we obtain the result to be true for $m=k$.
Now, we verify that $u(\xi)$ given by (\ref{2*}) is the solution of the Neumann BVP (\ref{1}) if conditions (\ref{1*}) are satisfied.\\ By using definition of $N_m(\eta,\xi)$, we have $L_0N_m(\eta,\xi)=N_{m-1}(\eta,\xi)$ in $B$. Therefore, by induction $L_0^m N_m(\eta,\xi)=\delta_\eta(\xi)$ in $B$.\\ Now it is obvious that $L_0^m u(\xi)=f(\xi)$ in $B$ with $u$ as in (\ref{2*}). Next, for $0 \leq j \leq m-2,$ $L_0^j u(\xi)$ can be expressed as
\ben
&=&\int_B{N_{m-j}(\eta,\xi)f(\eta)dv(\eta)}-\sum_{\mu=j}^{m-1}\int_{\partial B\setminus S_0}{N_{\mu-j+1}(\eta,\xi)g_{\mu}(\eta)d\sigma(\eta)}\\
&=& \int_B{\left(\int_B {N_{m-j-1}(\eta,\zeta)N(\zeta,\xi) dv(\zeta)}\right)f(\eta)dv(\eta)}\\
&&-\sum_{\mu=j}^{m-1}\int_{\partial B\setminus S_0}{\left(\int_B{N_{\mu-j}(\eta,\zeta)N(\zeta,\xi)dv(\zeta)}\right)g_{\mu}(\eta)d\sigma(\eta)},\\
&=& \int_B{\left(\int_B {N_{m-j-1}(\eta,\zeta)f(\eta)dv(\eta)}-\sum_{\mu=j+1}^{m-1}\int_{\partial B\setminus S_0}{N_{\mu-j}(\eta,\zeta)g_{\mu}(\eta)d\sigma(\eta)}\right)N(\zeta,\xi)dv(\zeta)}.
\een 
Therefore, by (\cite{sam}, Theorem 4.1)
$$\frac{\partial}{\partial n_0}(L_0^ju(\xi))=g_j(\xi) \;\;\text{on}\; \partial B \setminus S_0,\;0\leq j\leq m-2,$$ if first equality of (\ref{1*}) is satisfied.\\ For $j=m-1$,
$\frac{\partial}{\partial n_0}(L_0^ju(\xi))=g_{m-1}(\xi)$ if $\int_B{f(\xi)dv(\xi)}=\int_{\partial B\setminus S_0}{g_{m-1}(\xi)d\sigma(\xi)}.$
\end{proof}
\section{Polyharmonic Mixed Boundary Value Problems}
In this section we consider the mixed BVP arising out of $m$-Neumann and $n$-Dirichlet conditions.\\We define, for each positive integer $k$, the functions $G_k(\eta,\xi)$ and $P_k(\eta,\xi)$ inductively.\\ Let $G_1(\eta,\xi)=\overline{G}(\eta,\xi)$ and $P_1(\eta,\xi)=\overline{P}(\eta,\xi)$ and when $G_{k-1}(\eta,\xi)$ and $P_{k-1}(\eta,\xi)$ are defined, define
\beg \label{8}
G_k(\eta,\xi)=\int_B{G_{k-1}(\eta,\zeta)\overline{G}(\zeta,\xi)dv(\zeta)}
\eeg
\beg \label{9}
P_k(\eta,\xi)=\int_B{P_{k-1}(\eta,\zeta)G(\zeta,\xi)dv(\zeta)},
\eeg
where $G(\eta,\xi)$ is the Green's function for $B$ given by $G(\eta,\xi)=g_\eta(\xi)-K(g_\eta)(\xi^{-1})$ and $P(\eta,\xi)$ is the corresponding Poisson kernel for $B$ given by $P(\eta,\xi)=-\frac{1}{4}\frac{\partial}{\partial n_0} G(\eta,\xi)$ as in \cite{kori}. Similar computations as in Lemma \ref{N2}, show that $G_k(\eta,\xi)$ are well defined.\\ The details of the following lemma can be worked out by showing for any continuous $\phi$ defined on $\partial B$, $\int_{\partial B}{P_k(\eta,\xi)\phi(\xi)d\sigma(\xi)}$ exists.
\begin{lem}
The functions $P_k(\eta,\xi)$ as defined in (\ref{9}) are integrable over $\partial B$ for $k \in \mathbb{N}$, and hence exists finitely a.e.
\end{lem}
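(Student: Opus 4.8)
The plan is to proceed by induction on $k$, mirroring the singularity analysis of Lemma \ref{N2}, and to verify integrability by pairing $P_k$, regarded as a function of its boundary argument, against an arbitrary continuous $\phi$ on $\partial B$, as suggested above. For the base case $k=1$, the function $P_1=\overline{P}$ is the circular Poisson kernel of \cite{kori}; viewed as a function on $\partial B$ it is integrable, uniformly in the interior argument, because its integral over $\partial B$ reproduces the Dirichlet solution operator and equals a finite normalization constant. Alongside this I would record the companion estimate $\sup_{\xi\in B}\int_B|G(\zeta,\xi)|\,dv(\zeta)<\infty$, which follows from the local integrability of the fundamental solution $g_\eta$ together with the translation invariance of Haar measure, exactly as in the opening of Lemma \ref{N2}.

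For the inductive step, assume $P_{k-1}$ satisfies $\sup_{\zeta\in B}\int_{\partial B}|P_{k-1}(\eta,\zeta)|\,d\sigma(\eta)<\infty$. Inserting the definition (\ref{9}) and using Tonelli's theorem, I would estimate
\[ \int_{\partial B}|P_k(\eta,\xi)|\,d\sigma(\eta) \le \int_{\partial B}\!\int_B |P_{k-1}(\eta,\zeta)|\,|G(\zeta,\xi)|\,dv(\zeta)\,d\sigma(\eta), \]
which, after interchanging the two integrations, equals
\[ \int_B|G(\zeta,\xi)|\Big(\int_{\partial B}|P_{k-1}(\eta,\zeta)|\,d\sigma(\eta)\Big)dv(\zeta) \le C\int_B|G(\zeta,\xi)|\,dv(\zeta). \]
By the inductive bound and the $L^1$-estimate on $G$ recorded above, the right-hand side is finite, uniformly in $\xi$. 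This single computation simultaneously legitimizes the Fubini interchange, establishes the integrability of $P_k$ over $\partial B$ (hence its finiteness a.e.), propagates the uniform bound to stage $k$, and shows a fortiori that the pairing $\int_{\partial B}P_k(\eta,\xi)\phi(\eta)\,d\sigma(\eta)$ exists for every continuous $\phi$, closing the induction.

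The step I expect to be the main obstacle is the control of this double integral near the two singularities of the integrand, that is, the uniform estimates underlying it. As in Lemma \ref{N2}, the only singular contributions arise from the pole of $G(\zeta,\xi)$ at the interior point $\zeta=\xi$ and from the blow-up of the Poisson factor $P_{k-1}(\eta,\zeta)$ as the interior variable $\zeta$ approaches the boundary point $\eta$. Since for fixed $\xi\in B$ these two singular regions are disjoint, I would partition $B$ into a small neighbourhood $B_1$ of $\xi$, a thin collar $B_2$ along $\partial B$, and the remaining region: on $B_1$ the factor $P_{k-1}$ is bounded while $G(\cdot,\xi)$ is locally integrable; on $B_2$ the Green factor is bounded while the boundary integral of $P_{k-1}$ is controlled by the inductive hypothesis; on the complement both factors are bounded. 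The delicate point is the uniform integrability of the Poisson-type factor over $\partial B$ up to its boundary singularity; once this is secured for $k=1$ and shown to persist through the Green-smoothing, the remainder is routine.
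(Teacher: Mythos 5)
Your proposal is correct in outline but takes a genuinely more explicit route than the paper, whose entire ``proof'' of this lemma is the one-line remark that it suffices to show, for every continuous $\phi$ on $\partial B$, that $\int_{\partial B}P_k(\eta,\xi)\phi(\xi)\,d\sigma(\xi)$ exists. Your induction proves something stronger and more useful: the uniform bound $\sup_{\zeta\in B}\int_{\partial B}\left|P_{k-1}(\eta,\zeta)\right|d\sigma(\eta)<\infty$, propagated through one application of the Green kernel by Tonelli's theorem together with $\sup_{\xi\in B}\int_B G(\zeta,\xi)\,dv(\zeta)<\infty$, gives the $L^1$ statement of the lemma directly, and the paper's pairing criterion then falls out as a corollary, since $\left|\int_{\partial B}P_k(\eta,\xi)\phi(\eta)\,d\sigma(\eta)\right|\le \sup_{\partial B}|\phi|\int_{\partial B}\left|P_k\right|d\sigma$. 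The Green-kernel estimate you invoke is indeed safe: $0\le G(\zeta,\xi)\le g_\zeta(\xi)$ because $K(g_\zeta)(\xi^{-1})=N(\zeta)^{-2n}g_{\zeta^*}(\xi^{-1})\ge 0$, and $\int_B g_\zeta\, dv$ is bounded uniformly in $\zeta$ by translation invariance of the measure and local integrability of $g_e$, exactly as at the opening of Lemma \ref{N2}.

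The one step that is not yet a proof is your base case, and your justification of it does not suffice as stated: the reproducing property only identifies the \emph{signed} integral $\int_{\partial B}P_1(\eta,\zeta)\,d\sigma(\eta)$ with a normalization constant, which bounds $\int P_1$ but not $\int\left|P_1\right|$. What closes this is positivity of the Poisson kernel, which holds here but must be said: since $G(\zeta,\cdot)\ge 0$ in $B$ and $G(\zeta,\cdot)=0$ on $\partial B$, at every noncharacteristic boundary point the outward horizontal normal derivative of $G$ is nonpositive, so $P=-\frac{1}{4}\frac{\partial}{\partial n_0}G\ge 0$ on $\partial B\setminus S_0$; combined with the reproducing formula for circular $L_0$-harmonic functions applied to $u\equiv 1$, this gives $\int_{\partial B}\left|P_1(\eta,\zeta)\right|d\sigma(\eta)=1$ for every interior $\zeta$, which is precisely your inductive hypothesis at $k=1$. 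Relatedly, your description of the singular set is the Euclidean one: for a fixed interior argument, the dangerous region of $P_1$ on $\partial B$ is not where ``the interior variable approaches the boundary point'' but the characteristic set $S_0$, where $\nabla_0 F=0$ and $\frac{\partial}{\partial n_0}$ degenerates; your boundary collar should therefore be taken around $S_0$. With positivity in hand that region causes no harm, but without this observation the ``routine'' estimates you defer to would not close.
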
 For $0 \leq k \leq m-1$ and $0 \leq j \leq n-1$, we define
\beg \label{8*}
M_{k,j}(\eta,\xi)=\int_B{N_k(\eta,\zeta)G_j(\zeta,\xi)dv(\zeta)},
\eeg
\beg \label{9*}
S_{k,j}(\eta,\xi)=\int_B{N_k(\eta,\zeta)P_j(\zeta,\xi)dv(\zeta)},
\eeg
where $N_k(\eta,\xi)$ is defined in (\ref{2}). 
The only possible points of singularities of integrands in (\ref{8*}) and (\ref{9*}) are at $\zeta=\eta$ and $\zeta=\xi$. Writing $M_{k,j}$ as combination of six integrals $I_i,\; 1\leq i\leq 6$ of (\ref{N2'}) in Lemma \ref{N2} and estimating near these points, we can show that $M_{k,j}$ are well defined. Further in a similar way one can show that $S_{k,j}$ is convergent and the kernel is well defined.
\subsection{Polyharmonic Neumann-Dirichlet problem}
\begin{thm}
For $m,\;n \geq 1$ and continuous circular functions $f$ defined on $B$, $g_r$'s, $0 \leq r \leq m-1$ and $h_s$'s, $0\leq s \leq n-1$ defined on $\partial B$, the mixed BVP
\begin{equation} \label{7}
\left.
\begin{split}
 L_0^{m+n} u&=f \;\;\;\text{in}\;B\\
\frac{\partial}{\partial n_0}(L_0^r u)&=g_r\;\;\text{on}\; \partial B \setminus S_0,\;0 \leq r \leq m-1\\
L_0^{m+s}u&=h_s \;\;\text{on}\; \partial B,\;0 \leq s \leq n-1
\end{split}
\right\}
\end{equation} is solvable if and only if
\beg \label{10}
&&\int_B\left(\int_B{M_{m-r-1,n}(\eta,\xi)f(\eta)dv(\eta)}+\sum_{s=0}^{n-1}{\int_{\partial B}S_{m-r-1,s+1}(\eta,\xi)h_{n-s-1}(\eta)d\sigma(\eta)}\right. \nonumber\\
&&-\left.\sum_{\mu=r+1}^{m-1}{\int_{\partial B \setminus S_0}{N_{\mu-r}(\eta,\xi)g_{\mu}(\eta)d\sigma(\eta)}}\right)dv(\xi)\nonumber\\
&&=\int_{\partial B\setminus S_0}{g_r(\xi)d\sigma(\xi)},\;\;\;\;\;\; 0 \leq r \leq m-2,\nonumber\\
&&\int_B{\left(\int_B{G_n(\eta,\xi)f(\eta)dv(\eta)}+\sum_{s=0}^{n-1}{\int_{\partial B}{P_{s+1}(\zeta,\eta)h_{n-s-1}(\eta)d\sigma(\eta)}}\right)dv(\xi)}\nonumber\\
&&=\int_{\partial B \setminus S_0}{g_{m-1}(\xi)d\sigma(\xi)}.
\eeg
and the circular solution is given by
\beg \label{11}
u(\xi)&=&\int_B{M_{n,m}(\eta,\xi)f(\eta)dv(\eta)}+\sum_{s=0}^{n-1}{\int_{\partial B}{S_{m,s+1}(\eta,\xi)h_{n-s-1}(\eta)d\sigma(\eta)}}\nonumber\\
&-&\sum_{\mu=0}^{m-1} \int_{\partial B \setminus S_0}{N_{\mu+1}(\eta,\xi)g_{\mu}(\eta)d\sigma(\eta)}.
\eeg
\end{thm}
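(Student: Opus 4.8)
The plan is to factor the order $m+n$ mixed problem into a pure $n$-Dirichlet problem followed by a pure $m$-Neumann problem, mirroring the inductive reduction already used in Theorem \ref{N3}. Setting $v=L_0^m u$, the system (\ref{7}) splits into the Dirichlet problem
\ben
L_0^n v&=&f\quad\text{in }B,\\
L_0^s v&=&h_s\quad\text{on }\partial B,\;0\leq s\leq n-1,
\een
for $v$, together with the Neumann problem
\ben
L_0^m u&=&v\quad\text{in }B,\\
\frac{\partial}{\partial n_0}(L_0^r u)&=&g_r\quad\text{on }\partial B\setminus S_0,\;0\leq r\leq m-1,
\een
for $u$ with the now known right-hand side $v$. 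Since a polyharmonic Dirichlet problem on the Kor\'anyi ball is unconditionally solvable, the only compatibility constraints will come from the Neumann half, which is exactly why the solvability conditions (\ref{10}) inherit the structure of (\ref{1*}).

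First I would solve the Dirichlet problem through the iterated Green kernels $G_k$ and Poisson kernels $P_k$ of (\ref{8}) and (\ref{9}) (cf. \cite{amm}, \cite{kori}), obtaining
\ben
v(\zeta)&=&\int_B{G_n(\eta,\zeta)f(\eta)dv(\eta)}+\sum_{s=0}^{n-1}\int_{\partial B}{P_{s+1}(\eta,\zeta)h_{n-s-1}(\eta)d\sigma(\eta)}.
\een
Next I would feed $v$ into Theorem \ref{N3} in place of $f$: the representation (\ref{2*}) then yields $u$, while the Neumann solvability conditions (\ref{1*}) become conditions on $v$. The top order condition $\int_B v\,dv=\int_{\partial B\setminus S_0}g_{m-1}\,d\sigma$ reproduces the second line of (\ref{10}) verbatim once $v$ is inserted. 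For the remaining terms I would substitute the explicit $v$ into both (\ref{2*}) and the first line of (\ref{1*}) and interchange the order of integration by Fubini's theorem; the inner $\zeta$-integrals against $G_n(\eta,\zeta)$ and $P_{s+1}(\eta,\zeta)$ then collapse, by the symmetry of the circular kernels, against the defining relations (\ref{8*}) and (\ref{9*}) of $M_{k,j}$ and $S_{k,j}$. This is precisely what produces the representation (\ref{11}) and the first line of (\ref{10}).

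To confirm directly that (\ref{11}) solves (\ref{7}), I would use $L_0 N_k=N_{k-1}$ (from the proof of Theorem \ref{N3}) together with $L_0 G_k=G_{k-1}$ and $L_0 P_k=P_{k-1}$, which follow from (\ref{8}), (\ref{9}) and the Green identity $L_0\overline G(\zeta,\cdot)=\delta_\zeta$. These give $L_0^m M_{n,m}=N_n$, hence $L_0^{m+n}M_{n,m}(\eta,\xi)=\delta_\eta(\xi)$, so that $L_0^{m+n}u=f$ in $B$. Applying $L_0^{m}$ to (\ref{11}) leaves a function that solves the $n$-Dirichlet problem, whence $L_0^{m+s}u=h_s$ on $\partial B$; and the Neumann conditions $\frac{\partial}{\partial n_0}(L_0^r u)=g_r$ follow from Theorem \ref{N3} applied to $v$, invoking (\cite{sam}, Theorem 4.1) at the top order, provided the first line of (\ref{10}) holds.

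The main obstacle I expect is the bookkeeping that identifies the doubly iterated integrals with the kernels $M_{k,j}$, $S_{k,j}$ after the Fubini interchange. This needs, first, the integrability and well-definedness of $M_{k,j}$ and $S_{k,j}$ near the coincident singularities $\zeta=\eta$ and $\zeta=\xi$, established as in Lemma \ref{N2}, so that the interchange is legitimate; and second, the symmetry of the circular Green, Poisson and Neumann kernels in their two arguments, which is what lets the integration variable migrate between the first and second slots so that the composed integrals match the definitions (\ref{8*})--(\ref{9*}) and align the index pairs $(n,m)$ and $(m,s+1)$. Keeping the excluded characteristic set $S_0$ consistent throughout the surface integrals, and checking that circularity is preserved under each convolution so that (\cite{sam}, Theorem 4.1) stays applicable, are the remaining points requiring care.
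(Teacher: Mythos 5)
Your proposal follows essentially the same route as the paper: your substitution $v=L_0^m u$ (the paper's $w$) splits (\ref{7}) into the same $n$-Dirichlet problem solved via the representation from \cite{amm} and the same $m$-Neumann problem handled by Theorem \ref{N3}, with Fubini's theorem and the kernels (\ref{8*}), (\ref{9*}) then yielding the solvability conditions (\ref{10}) and the representation (\ref{11}). Your explicit verification step and your attention to the kernel symmetry needed in the Fubini interchange go slightly beyond the paper's terser write-up, but the argument is the same.
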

\begin{proof}
As in Lemma \ref{N2}, we can show that $\displaystyle{\int_B{M_{k,l}(\eta,\xi)f(\eta)dv(\eta)}}$;\\ $\displaystyle{\int_{\partial B}{S_{k,l}(\eta,\xi)h_r(\eta)d\sigma(\eta)}}$ and $\displaystyle{\int_{\partial B\setminus S_0}{N_{k}(\eta,\xi)g_\mu(\eta)d\sigma(\eta)}}$ exist and are defined finitely a.e. We decompose the above mixed boundary value problems into $m$-Neumann and $n$-Dirichlet problems. Let $L_0^m u=w$ in $\overline{B}$. Then above problem reduces into the following system of equations:  
\ben
L_0^m u&=&w\;\; \;\;\;\;\;\text{in}\; B\\
\frac{\partial}{\partial n_0}(L_0^r u)&=&g_r\;\;\;\;\;\;\;\text{on}\; \partial B \setminus S_0,\;0 \leq r \leq m-1\\
\een
By Theorem \ref{N3} the above polyharmonic Neumann problem is solvable if and only if (\ref{1*}) is satisfied for $w$ instead of $f$ and the solution is given by
$$u(\xi)=\int_B{N_m(\xi,\zeta)w(\zeta)dv(\zeta)}-\sum_{\mu=0}^{m-1}\int_{\partial B\setminus S_0}{N_{\mu+1}(\xi,\zeta)g_{\mu}(\zeta)d\sigma(\zeta)}.$$ Next consider  
\ben
L_0^n w&=&f\;\; \;\;\;\;\;\text{in}\; B\\
L_0^{s}w&=&h_{s} \;\;\;\;\;\text{on}\; \partial B,\;\;\;\;\;\;\;\;0 \leq s \leq n-1.
\een
The solution of above polyharmonic Dirichlet problem is given in \cite{amm} as
$$w(\zeta)=\int_B{G_n(\zeta,\eta)f(\eta)dv(\eta)}+\sum_{s=0}^{n-1}{\int_{\partial B}{P_{s+1}(\zeta,\eta)h_{n-s-1}(\eta)d\sigma(\eta)}}.$$
Hence, by substituting the value of $w(\zeta)$ in $u(\xi)$ and in the solvability condition, combining the integrals using Fubini's theorem and (\ref{8*}), (\ref{9*}), we get the required solvability conditions (\ref{10}) and the circular solution is given by (\ref{11}).\\
As we have verified in the last section, one can easily verify that $u(\xi)$ given by (\ref{11}) is the solution of the BVP (\ref{7}) if the solvability condition (\ref{10}) is satisfied.
\end{proof}
\subsection{Polyharmonic Dirichlet-Neumann problem}
\begin{thm}
For $m,\;n \geq 1$ and continuous circular functions $f$ defined on $B$, $g_r$'s, $0 \leq r \leq n-1$ and $h_s$'s, $0\leq s \leq m-1$ defined on $\partial B$, the mixed BVP
\begin{equation} \label{eq:5.28}
\left.
\begin{split}
 L_0^{m+n} u&=f \;\;\;\text{in}\;B\\
L_0^{r}u&=g_r \;\;\text{on}\; \partial B,\;0 \leq r \leq n-1\\
\partial^\perp(L_0^{n+s} u)&=h_s\;\;\text{on}\; \partial B,\;0 \leq s \leq m-1
\end{split}
\right\}
\end{equation} is solvable if and only if
\beg \label{eq:5.29}
&&\int_B\left(\int_B{N_{m-s-1}(\eta,\xi)f(\eta)dv(\eta)}-\sum_{\mu=s+1}^{m-1}{\int_{\partial B}N_{\mu-s}(\eta,\xi)h_{\mu}(\eta)d\sigma(\eta)}\right)dv(\xi) \nonumber\\
&&=\int_{\partial B}{h_s(\xi)d\sigma(\xi)},\;\;\;\;\;\; 0 \leq s \leq m-2,\;m \geq 2,\nonumber\\
&&\int_B{f(\xi)dv(\xi)}=\int_{\partial B}{h_{m-1}(\xi)d\sigma(\xi)},
\eeg
and the circular solution is given by
\beg \label{eq:5.30}
u(\xi)&=&\int_B{M_{m,n}(\eta,\xi)f(\eta)dv(\eta)}-\sum_{\mu=0}^{m-1}{\int_{\partial B}{M_{\mu+1,n}(\eta,\xi)h_{\mu}(\eta)d\sigma(\eta)}}\nonumber\\
&+&\sum_{r=0}^{n-1} \int_{\partial B}{P_{r+1}(\eta,\xi)h_{n-r-1}(\eta)d\sigma(\eta)}.
\eeg
\end{thm}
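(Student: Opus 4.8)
The plan is to mirror the decomposition used for the Neumann--Dirichlet problem, but with the roles of the two subproblems reversed to match the reversed ordering of the boundary operators. Since the Dirichlet conditions are now imposed on the lower-order iterates $L_0^r u$ $(0\le r\le n-1)$ and the Neumann conditions on the higher-order iterates $\partial^\perp(L_0^{n+s}u)$ $(0\le s\le m-1)$, the natural substitution is $w=L_0^n u$ in $\overline{B}$. With this choice (\ref{eq:5.28}) splits cleanly: the Dirichlet data remain attached to $u$, yielding an $n$-Dirichlet problem $L_0^n u=w$ in $B$ with $L_0^r u=g_r$ on $\partial B$ $(0\le r\le n-1)$; while the source and the Neumann data pass to $w$, since $\partial^\perp(L_0^{n+s}u)=\partial^\perp(L_0^s w)$, yielding an $m$-Neumann problem $L_0^m w=f$ in $B$ with $\partial^\perp(L_0^s w)=h_s$ on $\partial B\setminus S_0$ $(0\le s\le m-1)$.

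First I would solve the Neumann subproblem for $w$ by invoking Theorem \ref{N3}, with the $h_s$ playing the role of the Neumann data. Because the Dirichlet subproblem is unconditionally solvable, every solvability constraint for (\ref{eq:5.28}) must originate from this Neumann step; this is precisely why (\ref{eq:5.29}) involves only $f$, the $h_s$, and the kernels $N_k$, and coincides with (\ref{1*}) after the renaming $g_j\mapsto h_j$. Theorem \ref{N3} then gives the circular solution
$$w(\zeta)=\int_B N_m(\eta,\zeta)f(\eta)\,dv(\eta)-\sum_{\mu=0}^{m-1}\int_{\partial B\setminus S_0}N_{\mu+1}(\eta,\zeta)h_{\mu}(\eta)\,d\sigma(\eta).$$

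Next I would solve the Dirichlet subproblem via the polyharmonic Dirichlet representation of \cite{amm}, which expresses $u$ in terms of $w$ and the $g_r$ through the iterated Green kernel $G_n$ and the Poisson kernels $P_{r+1}$. Substituting the formula for $w$ and interchanging the orders of integration---justified by the integrability established in Lemma \ref{N2} together with the analogous statements for the Green and Poisson kernels---I would recognise the inner $\zeta$-integrals as the mixed kernels $M_{m,n}$ and $M_{\mu+1,n}$ of (\ref{8*}), while the Dirichlet data contribute the Poisson terms $P_{r+1}$. Collecting the three pieces produces exactly (\ref{eq:5.30}). Note that, unlike the Neumann--Dirichlet case, no kernel of type $S_{k,j}$ appears here, since the Neumann data enter through the interior of the Dirichlet solve rather than as its boundary values.

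The final step, and the one I expect to demand the most care, is the direct verification that (\ref{eq:5.30}) solves (\ref{eq:5.28}). The decisive reproducing identities are $L_0^n M_{m,n}=N_m$, whence $L_0^{m+n}M_{m,n}=\delta_\eta$ and the interior equation $L_0^{m+n}u=f$ follows as at the end of Theorem \ref{N3}; the Dirichlet boundary behaviour carried by $G_n$ and $P_{r+1}$; and the normal-derivative behaviour carried by the $N_k$ through (\cite{sam}, Theorem 4.1). The main obstacle is the index bookkeeping: for each $j$ one must track which terms of $L_0^j u$ survive on $\partial B$, so that the Dirichlet conditions $L_0^r u=g_r$ $(0\le r\le n-1)$ and the Neumann conditions $\partial^\perp(L_0^{n+s}u)=h_s$ $(0\le s\le m-1)$ emerge simultaneously from the single formula (\ref{eq:5.30}). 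Structurally this parallels the verifications already performed for the Neumann and Neumann--Dirichlet problems, so once the accounting of indices is arranged correctly each boundary condition collapses to the corresponding reproducing property.
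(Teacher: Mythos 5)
Your proposal follows essentially the same route as the paper: the substitution $w=L_0^n u$ splitting the problem into an $n$-Dirichlet problem for $u$ (solved via the representation of \cite{amm}) and an $m$-Neumann problem for $w$ (solved via Theorem \ref{N3}, which is the sole source of the solvability conditions), followed by substitution of $w$ into the Dirichlet formula, Fubini, and recognition of the kernels $M_{k,n}$ from (\ref{8*}). The only discrepancy is immaterial: your derivation correctly yields $g_{n-r-1}$ (the Dirichlet data) in the Poisson terms of (\ref{eq:5.30}), which matches the paper's proof and indicates the $h_{n-r-1}$ appearing there in the theorem statement is a typographical slip.
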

\begin{proof}
We decompose the above mixed boundary value problems into $m$-Neumann and $n$-Dirichlet problems. Let $L_0^n u=w$ in $\overline{B}$. Then above problem reduces into the following system of equations:  
\begin{equation} \label{eq:5.31}
\left.
\begin{split} 
L_0^n u&=w\;\; \;\;\;\;\;\text{in}\; B\\
L_0^r u&=g_r\;\;\;\;\;\;\;\text{on}\; \partial B ,\;0 \leq r \leq n-1
\end{split}
\right\}
\end{equation} and
\begin{equation} \label{eq:5.32}
\left.
\begin{split} 
L_0^m w&=f\;\; \;\;\;\;\;\text{in}\; B\\
\partial^\perp(L_0^s w)&=h_s\;\;\;\;\;\;\;\text{on}\; \partial B,\;0 \leq s \leq m-1.
\end{split}
\right\}
\end{equation}

The solution of polyharmonic Dirichlet problem (\ref{eq:5.31}) as given in \cite{amm} is
$$u(\xi)=\int_B{G_n(\xi,\zeta)w(\zeta)dv(\zeta)}+\sum_{r=0}^{n-1}{\int_{\partial B}{P_{r+1}(\xi,\zeta)g_{n-r-1}(\zeta)d\sigma(\zeta)}}.$$ By Theorem \ref{N3}, the polyharmonic Neumann problem (\ref{eq:5.32}) is solvable if and only if (\ref{eq:5.29}) is satisfied and the solution is given by
$$w(\zeta)=\int_B{N_m(\zeta,\eta)f(\eta)dv(\eta)}-\sum_{\mu=0}^{m-1}\int_{\partial B}{N_{\mu+1}(\zeta,\eta)h_{\mu}(\eta)d\sigma(\eta)}.$$
Hence, by substituting the value of $w(\zeta)$ in $u(\xi)$, combining the integrals using Fubini's theorem and (\ref{8*}), we get the circular solution as given by (\ref{eq:5.30}).
As we have verified in the last section, one can easily show that $u(\xi)$ given by (\ref{eq:5.30}) is the solution of the BVP (\ref{eq:5.28}) if the solvability condition (\ref{eq:5.29}) is satisfied.
\end{proof}

\section*{acknowledgements}
 The first author is supported by the Senior Research Fellowship of Council of Scientific and Industrial Research, India (Grant no. 09/045(1152)/2012-EMR-I) and the second author is supported by R \& D grant from University of Delhi, Delhi, India.


\begin{thebibliography}{5}
\bibitem{beg2} H. Begehr, \textit{Orthogonal decompositions of the function space $L_2(\bar{D};\C)$},J. reine angew. Math. 549 (2002), pp. 191--219.
\bibitem{bolaug} A. Bonfiglioli, E. Lanconelli, F. Uguzzoni, \textit{Stratified Lie Groups and Potential Theory for their sub-Laplacians}, Springer Monograph in Mathematics, Springer-Verlag Berlin-Heidelberg 2007.
\bibitem{foko} G. B. Folland and J. J. Kohn, \textit{The Neumann Problem for the Cauchy Riemann Complex}, Princeton University Press, Princeton, NJ, 1972.
\bibitem{fol} G. B. Folland, \textit{A Fundamental Solution for a subelliptic operator}, Bull. Am. Math. Soc.79 (1973), pp. 373--376.
\bibitem{sam} S. Dubey, A. Kumar, M. M. Mishra, \textit{The Neumann problem for the Kohn-Laplacian on the Heisenberg Group $\mathbb H_n$}, Potential Anal. (2016), 1--15, doi:10.1007/s11118-016-9538-1.
\bibitem{gav} B. Gaveau, \textit{Principe de moindre action, propagation da la chaleur et estim\'{e}es sous-elliptiques sur certaiins groupes nilpotents}, Acta Math, 139 (1977), pp. 95--153.
\bibitem{gav2} B. Gaveau, \textit{Syst$\grave{e}$mes dynamiques associ$\acute{e}$s $\grave{a}$ certains op$\acute{e}$rateurs hypoelliptiques}, Bull. Sc. Math., Paris, $2^c$ s$\acute{e}$rie, T. 102, 1978, pp. 203--229.
\bibitem{grko} P. C. Greiner and T. H. Koornwinder, \textit{Variations on the Heisenberg spherical harmonics}, Report ZW 186/83 Mathematisch Centrum, Amsterdam, 1983.
\bibitem{jeri} David S. Jerison, \textit{The Dirichlet problem for the Kohn Laplacian on the Heisenberg group}, I, J. Func. Anal. 43 (1981), pp. 97--142.
\bibitem{kor82} A. Kor\'{a}nyi, \textit{Kelvin transforms and harmonic polynomials on the Heisenberg group}, J. Funct. Anal. 49 (1982), pp. 177--185.
\bibitem{kor83} A. Kor\'{a}nyi, \textit{Geometric aspects of analysis on the Heisenberg group}, in ``Topics in modern harmonic analysis", Instituto nazionale di Alta mathematica, Roma, 1983, pp. 209--258.
\bibitem{kor6}  A. Kor\'{a}nyi, \textit{Poisson formulas for circular functions and some groups of type H}, Sci. China Ser. A: Math. 49 (2006), pp. 1683--1695.  
\bibitem{kori} A. Kor\'{a}nyi and H. M. Riemann, \textit{Horizontal normal vectors and conformal capacity of spehrical rings in the Heisenberg group}, Bull. Sci. Math. Ser. 2 111 (1987), pp. 3--21.
\bibitem{amm} A. Kumar and M. M. Mishra, \textit{Polyharmonic Dirichlet problem on the Heisenberg group}, Complex Var. Elliptic Equ. 53 (2008), no. 12, pp. 1103--1110.
\bibitem{than} S. Thangavelu, \textit{Harmonic Analysis on the Heisenberg Group}, Birkh\"{a}user, Boston, Basel, Berlin, 1998.
\end{thebibliography}
\end{document}